\algrenewcommand\algorithmicrequire{\textbf{Input:}}
\newcommand{\CommentState}[1]{\Statex\hspace{\algorithmicindent}{\color{blue}// #1}}
\newtheorem{assumption}{Assumption}
\newtheorem{lemma}{Lemma}
\newtheorem{proposition}{Proposition}
\newtheorem{remark}{Remark}
\DeclareMathOperator*{\blk}{blk\,diag}
\newcommand{\N}{\mathbb{N}}
\newcommand{\R}{\mathbb{R}}
\newcommand{\norm}[1]{\left\lVert#1\right\rVert}
\newcommand{\x}{\mathbold{x}}
\newcommand{\y}{\mathbold{y}}
\newcommand{\z}{\mathbold{z}}
\newcommand{\w}{\mathbold{w}}
\newcommand{\uv}{\mathbold{u}}
\newcommand{\W}{\mathbold{W}}
\newcommand{\0}{\mathbf{0}}
\newcommand{\1}{\mathbf{1}}
\newcommand{\lmax}{\bar{\lambda}}
\newcommand{\lmin}{\underline{\lambda}}
\newcommand{\ubar}[1]{\underaccent{\bar}{#1}}
\title{\LARGE \bf
Control-Based Online Distributed Optimization
}
\author{%
	Wouter J. A. van Weerelt$^{1}$, Nicola~Bastianello$^{2,\star}$
    \thanks{The work of N.B. was partially supported by the European Union’s Horizon Research and Innovation Actions programme under grant agreement No. 101070162.}
	\thanks{$^{1}$KTH Royal Institute of Technology, Stockholm, Sweden.}%
	\thanks{$^{2}$School of Electrical Engineering and Computer Science, and Digital Futures, KTH Royal Institute of Technology, Stockholm, Sweden.}
	\thanks{$^{\star}$Corresponding author. Email: {\tt\footnotesize nicolba@kth.se}.}%
}
\begin{document}

\maketitle

\thispagestyle{plain}
\pagestyle{plain}


\begin{abstract}
In this paper we design a novel class of online distributed optimization algorithms leveraging control theoretical techniques.
We start by focusing on quadratic costs, and assuming to know an internal model of their variation. In this set-up, we formulate the algorithm design as a robust control problem, showing that it yields a fully distributed algorithm. We also provide a distributed routine to acquire the internal model.
We show that the algorithm converges exactly to the sequence of optimal solutions.
We empirically evaluate the performance of the algorithm for different choices of parameters.
Additionally, we evaluate the performance of the algorithm for quadratic problems with inexact internal model and non-quadratic problems, and show that it outperforms alternative algorithms in both scenarios.
\end{abstract}

\section{Introduction}\label{sec:intro}
Cyber-physical systems, composed of interconnected smart devices, are now widespread in a range of applications, including power systems, transportation, internet-of-things \cite{molzahn_survey_2017,nedic_distributed_2018,yang_survey_2019}.
These devices utilize their sensing, communication, and computation resources to cooperatively accomplish a broad range of tasks, \textit{e.g.} learning, coordination, navigation \cite{nedic_distributed_2018}. These tasks can be encoded as distributed optimization problems, and there is thus a need for suitable solution algorithms.
However, these devices are often embedded in a dynamic environment, and the data they collect changes over time. Therefore, we are interested in solving \textit{online} distributed optimization problems:
\begin{equation}\label{eq:problem}
\begin{split}
    &\min_{x_i \in \R^n, \ i \in [N]} \sum_{i = 1}^N f_{i,k}(x_i) \\
    &\qquad \text{s.t.} \ x_1 = \ldots = x_N
\end{split}
\end{equation}
where the local costs $f_{i,k}$ change over time in response to changes in the environment.

Online optimization has been extensively explored in both centralized \cite{dallanese_optimization_2020,simonetto_timevarying_2020} and distributed scenarios \cite{li_survey_2023}.
Different online distributed algorithms have been proposed and analyzed, including gradient tracking, DGD, ADMM, and mirror descent, to name a few.
Importantly, all these algorithms can only achieve inexact tracking of the optimal solution sequence of~\eqref{eq:problem} \cite{li_survey_2023}. This is proven by a sub-linearly growing regret, or by bounds to the distance from the optimal trajectory \cite{li_survey_2023}. Additionally, gradient tracking methods might be outperformed by DGD \cite{yuan_can_2020}, even though they converge exactly in a static scenario while DGD does not.
Thus in this paper we are interested in designing a class of algorithms which can \textit{achieve exact convergence for online distributed problems}.

To this end, we adopt a control-based design approach, which is proving a fruitful and effective framework \cite{dorfler_systems_2024}.
Control theory has been leveraged to design a broad range of \textit{static} distributed optimization algorithms \cite{wang_control_2010,bin_stability_2022,vanscoy_universal_2022,zhang_frequency_2024,carnevale_unifying_2025}, yielding several gradient tracking methods. However, these algorithms are not tailored for online problems, and thus only converge inexactly.
Additionally, control-based online algorithms have been designed in \cite{bastianello_internal_2024,casti_control_2025,casti_stochastic_2024,simonetto_nonlinear_2024}, leveraging different techniques including internal model principle, robust control, and Kalman filter.
However, these algorithms do not apply directly to a distributed scenario.
Following \cite{simonetto_timevarying_2020}, we refer to static algorithms applied to online problems as \textit{unstructured}, while algorithms tailored to online problems are called \textit{structured}.

Therefore, in this paper we draw from the literature on gradient tracking \cite{alghunaim_decentralized_2021} and the control-based approach of \cite{bastianello_internal_2024,casti_control_2025} to design a novel class of structured online distributed algorithms.
The idea is to leverage the problem reformulation of \cite{alghunaim_decentralized_2021}, which unifies several gradient tracking methods. But instead of solving the reformulated problem with static primal-dual, we solve it with the control-based algorithm of \cite{casti_control_2025}.
The resulting class of distributed algorithms is characterized by the choice of a triplet of consensus matrices.
We prove that the algorithms converge \textit{exactly} to the optimal trajectory when applied to \textit{quadratic problems}.
Designing the algorithm requires knowledge of a model of the problem variation over time. Thus, we also propose a provably convergent procedure to build this model in a distributed fashion.
Finally, we evaluate the algorithms' performance for non-quadratic problems, showing that they outperform unstructured alternatives, even though they do not converge exactly.

\section{Background}\label{sec:background}
In this section we briefly review \cite{alghunaim_decentralized_2021}, which provides a unified framework to design and analyze (static) gradient tracking algorithms, and apply it to the online problem~\eqref{eq:problem}. This unified perspective will then be instrumental in section~\ref{sec:structured} to design our proposed algorithm.

In the following, we model the agents interconnections by the graph $\mathcal{G} = (\mathcal{V}, \mathcal{E})$, $\mathcal{V} = [N]$, assumed undirected and connected.

\subsection{Problem reformulation}\label{subsec:problem-reformulation}
The first step is to reformulate~\eqref{eq:problem} into a saddle point problem.
To this end, we notice that the following problem
\begin{equation}\label{eq:problem-regularized}
\begin{split}
    &\min_{x_i \in \R^n, \ i \in [N]} \ \mu \sum_{i = 1}^N f_{i,k}(x_i) + \frac{1}{2} \x^\top \W_3 \x \\
    &\hspace{2cm} \text{s.t.} \ \W_2 \x = \0
\end{split}
\end{equation}
where $\x = [x_1^\top, \ldots, x_N^\top]^\top \in \R^{n N}$, is equivalent to~\eqref{eq:problem} provided that i) $\W_2 \x = \0$ if and only if $x_1 = \ldots = x_N$; and ii) $\W_3$ is either zero or $\W_3 \x = \0$ if and only if $\W_2 \x = \0$.
$\mu > 0$ is a tunable weight parameter that in section~\ref{subsec:unstructured} will serve as step-size.

Solving~\eqref{eq:problem-regularized} is in turn equivalent to solving the saddle point problem
\begin{equation}\label{eq:problem-saddle}
    \min_{\x, \w} \mathcal{L}_k(\x, \w)
\end{equation}
where we define the (online) Lagrangian
\begin{equation}\label{eq:lagrangian}
    \mathcal{L}_k(\x, \w) = \mu \sum_{i = 1}^N f_{i,k}(x_i) + \w^\top \W_2 \x + \frac{1}{2} \x^\top \W_3 \x,
\end{equation}
and $\w = [w_1^\top, \ldots, w_N^\top]^\top$, $w_i \in \R^n$, are the dual variables.

\subsection{Unstructured algorithm design}\label{subsec:unstructured}
Following \cite{alghunaim_decentralized_2021}, we can now apply the gradient descent-ascent method to~\eqref{eq:problem-saddle} (with step-size of $1$), which yields the updates
\begin{subequations}\label{eq:unstructured}
\begin{align}
    \z_{k+1} &= \x_k - \left( \mu \nabla f_k(\x_k) + \W_2 \w_k + \W_3 \x_k \right) \label{eq:unstructured-primal} \\
    \w_{k+1} &= \w_k + \W_2 \z_k \label{eq:unstructured-dual} \\
    \x_{k+1} &= \W_1 \z_{k+1} \label{eq:unstructured-combine}
\end{align}
\end{subequations}
where $\nabla f_k(\x)$ is the vector stacking all the local gradients $\nabla f_{i,k}(x_i)$, and $\w_0 = \0$.
Clearly, computing $\W_2 \w_k$ and $\W_3 \x_k$ requires a round of peer-to-peer communications; moreover,~\eqref{eq:unstructured-combine} serves as a ``combine'' step, applying an additional consensus round between the agents.

Different choices of the consensus matrices $\W_1$, $\W_2$, $\W_3$ yield different gradient tracking techniques. Defining $\W = W \otimes I_n$, with $W$ a doubly stochastic matrix for the graph $\mathcal{G}$, Table~\ref{tab:matrix-triplets} reports some options \cite{alghunaim_decentralized_2021}.
\begin{table}[!ht]
\caption{Consensus matrices triplets and corresponding gradient tracking algorithms.}
\label{tab:matrix-triplets}
    \centering
    \begin{tabular}{l c c c}
         Algorithm & $\W_1$ & $\W_2^2$ & $\W_3$ \\ 
        \hline
        Aug-DGM  & $\W^2$ & $(I - \W)^2$ & $0$ \\
    
        Exact diffusion & $0.5(I + \W)$ & $0.5(I - \W)$ & $0$ \\
  
        DIGing & $I$ & $(I - \W)^2$ & $I - \W^2$ \\

        EXTRA & $I$ & $0.5(I - \W)$ & $0.5(I - \W)$ \\

    \end{tabular}
\end{table}

The results of \cite{alghunaim_decentralized_2021} guarantee convergence of~\eqref{eq:unstructured} for \textit{static problems}, but not for online ones.
Indeed,~\eqref{eq:unstructured} is an unstructured online algorithm, in that it does not account for the time-variability of the problem in its design. The following section will then fill this gap resorting to a control-based approach.

\section{Algorithm Design and Analysis}\label{sec:structured}

In section~\ref{subsec:problem-reformulation} we have reformulated~\eqref{eq:problem} into a regularized, online problem with linear equality constraints. Therefore, we can apply the control-based approach of \cite{casti_control_2025} to design a suitable algorithm.
To this end, we introduce first the following assumptions.

\begin{assumption}[Cost]\label{as:cost}
The local costs are quadratic:
$$
    f_{i,k}(x) = \frac{1}{2} x^\top A_i x + \langle b_{i,k}, x \rangle, \quad i \in [N], \ \forall k \in \N,
$$
with $\lmin I_n \preceq A_i \preceq \lmax I_n$, $\lmax > 0$.
\end{assumption}

\begin{assumption}[Internal model]\label{as:internal-model}
The sequence of vectors $\{ b_{i,k} \}_{k \in \N}$, $i \in [N]$, has rational $\mathcal{Z}$-transform
$$
    \mathcal{Z}[b_{i,k}] = \frac{B_{i,\mathrm{N}}(z)}{B_\mathrm{D}(z)}
$$
where the polynomial $B_\mathrm{D}(z) = z^m + \sum_{\ell = 0}^{m - 1} p_\ell z^\ell$ is known and has marginally stable zeros.
\end{assumption}

Assumption~\ref{as:cost} is required, in line with \cite{casti_control_2025}, so that the algorithm design can be recast as a linear, robust control problem. However, the resulting algorithm can be applied to general, non-quadratic problems as well, and outperforms unstructured alternatives as reported in section~\ref{subsec:numerical-nonquadratic}.
Assumption~\ref{as:internal-model} is required to achieve exact tracking. For simplicity, we take the denominator $B_\mathrm{D}(z)$ to be the same across all agents; this is not restrictive, as $B_\mathrm{D}(z)$ can be taken as the common denominator of all agents' local transfer functions. In section~\ref{subsec:controller-design} we detail how to compute this common denominator in a distributed fashion.

\subsection{Structured algorithm design}
Assumptions~\ref{as:cost},~\ref{as:internal-model} allow us to apply the algorithm design of \cite{casti_control_2025}, and the result is characterized by the following updates
\begin{subequations}\label{eq:structured-global}
\begin{align}
    \mathbold{\xi}_{k+1} & = \left( F \otimes I_{nN} \right) \mathbold{\xi}_k + \left( G \otimes I_{nN}\right)\nabla_x\mathcal{L}_k\left(\y_k,\uv_k\right) \label{eq:structured-global-xi} \\
    \mathbold{\omega}_{k+1} &= \left(F\otimes I_{nN}\right)\mathbold{\omega}_k + \left( G \otimes I_{nN} \right) \nabla_w \mathcal{L}_k(\y_k,\uv_k) \label{eq:structured-global-om} \\
    \y_{k+1} &= \left( H \otimes I_{nN} \right) \mathbold{\xi}_{k+1} \label{eq:structured-global-y} \\
    \uv_{k+1} &= -\tau \left( H \otimes I_{nN}\right)\mathbold{\omega}_{k+1} \label{eq:structured-global-u} \\
    \x_{k+1} &= \W_1 \y_{k+1} \label{eq:structured-global-x} \\
    \w_{k+1} & = \W_1 \uv_{k+1} \label{eq:structured-global-w}
\end{align}
\end{subequations}
where $\mathbold{\xi}$, $\mathbold{\omega}$ are the states of the internal model, $\y$, $\uv$ are the outputs of the internal model, and $\x$, $\w$ the primal and dual variables after a ``combine'' step.
The online Lagrangian is defined according to~\eqref{eq:lagrangian} and, as before, its computation requires peer-to-peer communications.
Finally, the matrices defining~\eqref{eq:structured-global} are
\begin{align*}
    F &= \begin{bmatrix} 0 & 1 & 0 & \cdots & \\ \vdots  & & \ddots & \\  0 & \cdots & 0 &1\\-p_0 & \cdots & \cdots & p_{m-1}\end{bmatrix},%
    \hspace{0.3cm} G = \begin{bmatrix}
        0 \\ \vdots\\0\\1\end{bmatrix} \\
    H&= \begin{bmatrix}
        h_0 & h_1 & \cdots & \cdots & h_{m-1}\end{bmatrix}.
\end{align*}
where $\{ p_\ell \}_{\ell = 0}^{m-1}$ are the coefficients of the internal model Z-transform (see Assumption~\ref{as:internal-model}), and $\{ h_\ell \}_{\ell = 0}^{m-1}$ the coefficients of the stabilizing controller designed according to \cite{casti_control_2025}.
We remark that we take the Kronecker product of $F$, $G$, $H$ with $I_{nN}$ to account for the fact that we have $N$ local states of dimension $n$.

Figure~\ref{fig:block-diagram} reports a block diagram representation of~\eqref{eq:structured-global}, where $\mathcal{F} = \blk\{ F, F \} \otimes I_{n N}$, $\mathcal{G} = \blk\{ G, G \} \otimes I_{n N}$, $\mathcal{H} = \blk\{ H, H \} \otimes I_{n N}$.

\begin{figure}[!ht]
\centering
\includegraphics[width=0.95\linewidth]{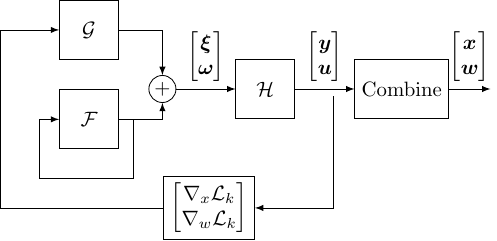}
\caption{Block diagram depiction of~\eqref{eq:structured-global}.}
\label{fig:block-diagram}
\end{figure}

\smallskip

Inspecting~\eqref{eq:structured-global} it is not readily apparent whether the algorithm can indeed be implemented in a distributed fashion.
Exploiting the separable structure of~\eqref{eq:problem-regularized}, we can rewrite~\eqref{eq:structured-global} as the following set of updates performed by each agent $i \in [N]$:
\begin{subequations}\label{eq:structured-local}
\begin{align}
    \xi_{i,k+1} &= \left( F \otimes I_n\right) \xi_{i,k} + \left( G \otimes I_n \right) \left[ \nabla_x\mathcal{L}_k(\y_k,\uv_k)\right]_i \label{eq:structured-local-xi} \\
    \omega_{i,k+1} &= \left( F \otimes I_n\right) \omega_{i,k} + \left( G \otimes I_n \right) \left[ \nabla_x\mathcal{L}_w(\y_k,\uv_k)\right]_i \label{eq:structured-local-om} \\
    y_{i,k+1} &= \left( H \otimes I_n\right)\xi_{i,k+1} \label{eq:structured-local-y} \\
    u_{i,k+1} &= -\tau \left( H \otimes I_n\right) \omega_{i,k+1} \label{eq:structured-local-u} \\
    x_{i,k+1} &= \sum_{j \in \mathcal{N}_i \cup \{i\}}a_{ij} y_{j,k+1} \label{eq:structured-local-x} \\
    w_{i,k+1} &= \sum_{j \in \mathcal{N}_i \cup \{i\}}a_{ij} u_{j,k+1} \label{eq:structured-local-w}
\end{align}
\end{subequations}
where $\xi_i$, $\omega_i$ are the states of the local internal model, $y_i$, $u_i$ are the outputs of the local internal model, and $x_i$, $w_i$ the primal and dual local states.
The $i$-th components of the Lagrangian's gradients are given by
\begin{subequations}\label{eq:local-gradients}
\begin{align}
    \left[\nabla_x \mathcal{L}_k \right]_i &=\mu \nabla f_{i,k}\left( y_{i,k} \right) + \sum_{j \in \mathcal{N}_i \cup \{i\}} \left( c_{ij} y_{j,k} + b_{ij} u_{j,k}\right) \\
    \left[\nabla_w\mathcal{L}_k\right]_i &= \sum_{j \in \mathcal{N}_i \cup \{i\}} b_{ij} u_{j,k}
\end{align}
\end{subequations}
making explicit the need for peer-to-peer communications.

\smallskip

Algorithm~\ref{alg:distributed} reports the pseudo-code implementation of~\eqref{eq:structured-local}, highlighting the necessary operations required by the agents.
\begin{algorithm}[!ht]
\caption{Control-based online distributed optimization}
\label{alg:distributed}
\begin{algorithmic}
\Require{Consensus matrices $\W_1$, $\W_2$, $\W_3$; internal model and controller $F$, $H$; step-size $\mu$ and scaling parameter $\tau$. Initialize $y_{i,0}$ and $u_{i,0} = 0$, $i \in [N]$.}
\For{$k=0,1,2,\ldots$ each agent $i \in [N]$:}
    \CommentState{Communication}
    \State sends $y_{i,k}$ and $u_{i,k}$ to each $j \in \mathcal{N}_i$
    \State and receives $y_{j,k}$ and $u_{j,k}$ from each $j \in \mathcal{N}_i$

    \CommentState{Combine}
    \State updates the local primal and dual states by~\eqref{eq:structured-local-x},~\eqref{eq:structured-local-w}

    \CommentState{Local updates}
    \State observes the cost $f_{i,k}$ and computes the gradients~\eqref{eq:local-gradients}
    \State updates the internal model by~\eqref{eq:structured-local-xi},~\eqref{eq:structured-local-om}
    \State computes the outputs by~\eqref{eq:structured-local-y},~\eqref{eq:structured-local-u}
\EndFor
\end{algorithmic}
\end{algorithm}
The first step is the peer-to-peer transmission of the $y$ and $u$ variables, the only necessary communication step. Then the agents update their local estimates of the primal and dual solutions ($x$, $w$). Finally, they update their local internal models and compute the next outputs $y$ and $u$.

\begin{remark}[Predictive optimization]
We remark that Algorithm~\ref{alg:distributed} works in a predictive fashion, as the costs $f_{i,k}$, $i \in [N]$, are used to compute the states $x_{i,k+1}$ to be applied at time $k+1$ \cite{bastianello_extrapolation_2023}.
\end{remark}

\begin{remark}[Beyond quadratic]\label{rem:non-quadratic}
Algorithm~\ref{alg:distributed} can be applied for a wider range of problems beyond quadratic ones. Indeed, we can see from~\eqref{eq:local-gradients} that the agents only need to access local gradient evaluations in order to apply the algorithm. We evaluate the corresponding performance in section~\ref{sec:numerical}.
\end{remark}

\subsection{Distributed controller design}\label{subsec:controller-design}
Algorithm~\ref{alg:distributed} requires each agent to have access to the internal model $F$ and controller $H$. However, this is global information, and no single agent has access to them.
Thus, in the following we sketch a distributed procedure that the agents can apply to obtain this information.

The first step is for the agents to construct $F$. Let us assume that each agent $i \in [N]$ has access to the local internal model
$$
    \mathcal{Z}[b_{i,k}] = \frac{B_{i,\mathrm{N}}(z)}{B_{i,\mathrm{D}}(z)}
$$
where $B_{i,\mathrm{D}}(z) = z^{m_i} + \sum_{\ell = 0}^{m_i - 1} p_{i,\ell} z^\ell$, with $m_i$ the local degree. The model for example can be constructed from historical observations.
The agents can then apply Algorithm~\ref{alg:common-denominator} to compute the common denominator $B_\mathrm{D}(z)$ of the internal models.
\begin{algorithm}[!ht]
\caption{Distributed computation of $B_\mathrm{D}(z)$}
\label{alg:common-denominator}
\begin{algorithmic}
\Require{Local internal models $B_{i,\mathrm{D}}(z)$, $i \in [N]$; stopping condition $K \in \N$.}

\State{{\color{blue}// Initialization}}
\State Each agent $i \in [N]$ computes the $m_i$ roots $\mathcal{R}_i$ of $B_{i,\mathrm{D}}(z)$

\For{$k=1,2,\ldots, K$ each agent $i \in [N]$:}
    \CommentState{Communication}
    \State sends $\mathcal{R}_i$ to each $j \in \mathcal{N}_i$
    \State and receives $\mathcal{R}_j$ from each $j \in \mathcal{N}_i$

    \CommentState{Combine}
    \State updates $\mathcal{R}_i = \bigcup_{j \in \mathcal{N}_i \cup \{ i \}} \mathcal{R}_j$
\EndFor
\end{algorithmic}
\end{algorithm}
The idea of the algorithm is to distributedly compute the union of the roots of $B_{i,\mathrm{D}}(z)$, $i \in [N]$, which Lemma~\ref{lem:common-denominator} below proves is indeed possible in finite time.
Once the agents have computed $B_\mathrm{D}(z)$, matrix $F$ is simply its canonical control form realization.

\begin{lemma}\label{lem:common-denominator}
Algorithm~\ref{alg:common-denominator} succeeds in computing the common denominator of $B_{i,\mathrm{D}}(z)$, $i \in [N]$ provided that $K$ is larger than the diameter\footnote{That is, the length of the longest path between any two nodes in the graph.} of the graph.
\end{lemma}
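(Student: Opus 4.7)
The plan is to establish by induction on the iteration index $k$ that, immediately after the $k$-th combine step of Algorithm~\ref{alg:common-denominator}, each agent $i$ holds the set $\mathcal{R}_i^{(k)} = \bigcup_{j \,:\, d(i,j) \le k} \mathcal{R}_j^{(0)}$, where $d(\cdot,\cdot)$ denotes graph distance in $\mathcal{G}$ and $\mathcal{R}_j^{(0)}$ is the initial local root set computed by agent $j$ in the initialization line. The base case $k=0$ is immediate from that initialization. For the inductive step I would invoke the elementary identity $\{\ell : d(i,\ell) \le k+1\} = \{i\} \cup \bigcup_{j \in \mathcal{N}_i} \{\ell : d(j,\ell) \le k\}$, which together with the combine rule $\mathcal{R}_i \leftarrow \bigcup_{j \in \mathcal{N}_i \cup \{i\}} \mathcal{R}_j$ and the inductive hypothesis closes the induction in a single line.

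Once the induction is in hand, the conclusion follows quickly: letting $D$ denote the diameter of $\mathcal{G}$, for any $K > D$ every pair of nodes satisfies $d(i,j) \le K$, so each agent terminates with $\mathcal{R}_i = \bigcup_{j \in [N]} \mathcal{R}_j^{(0)}$. Each agent then locally assembles $B_{\mathrm{D}}(z) = \prod_{\alpha \in \mathcal{R}_i}(z-\alpha)$; since every local $B_{i,\mathrm{D}}(z)$ vanishes on a subset of $\mathcal{R}_i$, the resulting polynomial is divisible by each $B_{i,\mathrm{D}}(z)$ and therefore serves as a common denominator in the sense used in Assumption~\ref{as:internal-model}. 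From $B_\mathrm{D}(z)$, the matrix $F$ is then obtained as its canonical control form realization.

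I do not anticipate a serious technical obstacle: the argument is the standard flooding / max-consensus fact on a connected undirected graph, and the whole proof reduces to the one-line inductive identity above. The only point I would flag carefully is that the algorithm transports \emph{sets} rather than multisets of roots, so the reconstruction is exact only when each local denominator has simple zeros. The marginal-stability hypothesis in Assumption~\ref{as:internal-model} already rules out repeated zeros on the unit circle, which is precisely where multiplicities matter for the internal-model principle; if faithful tracking of strictly stable repeated poles were ever needed, one could replace the set operations by multiset operations keeping the per-root maximum multiplicity, and the same induction would go through unchanged.
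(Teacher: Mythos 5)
Your proposal is correct and follows essentially the same argument as the paper: roots propagate by union along paths of the connected graph, so after a number of iterations exceeding the diameter every agent holds the full root set, from which the common denominator is assembled. Your induction on the graph distance, together with the remark on root multiplicities, is simply a more formalized version of the paper's (more informal) flooding argument.
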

\begin{proof}
The graph is assumed to be connected, thus there exists a path between any two nodes, with length smaller than or equal to the diameter $\delta$.
The agents start with $\mathcal{R}_i$ containing the roots of $B_{i,\mathrm{D}}(z)$, and then expand it to include all roots in $\mathcal{R}_j$, $j \in \mathcal{N}_i$ not already included. But it takes at most $\delta$ steps for the roots of $h \in [N]$ to propagate to $i$ by union with the agents in between, and the result holds.
\end{proof}

\smallskip

The second step is for the agents to compute the matrix $H$ characterizing the controller. Given that at this stage the agents already have access to the necessary global information, this step can be performed entirely in parallel.
In particular, following \cite{casti_control_2025}, the controller can be computed solving the following two LMIs w.r.t. $\underline{P}, \overline{P} \succ 0$, $S \in \R^{m \times m}$, and $R \in \R^{1 \times m}$:
\begin{equation}\label{eq:lmi}
\begin{split}
    \begin{bmatrix}
        \underline{P} & F S + \ubar{l} H R\\
        S^{\top} F^{\top} + \ubar{l} R^{\top} H^{\top} & S + S^{\top} - \underline{P}
    \end{bmatrix} &\succ 0,\\
    \begin{bmatrix}
        \overline{P} & F S + \bar{l} H R \\
        S^{\top} F^{\top} + \bar{l} R^{\top} H^{\top} & S + S^{\top} - \overline{P}
    \end{bmatrix} &\succ 0.
\end{split}
\end{equation}
Notice that the values of $\ubar{l}$ and $\bar{l}$ depend on $\underline{\lambda}$, $\overline{\lambda}$, and the choice of consensus matrices -- see \cite{casti_control_2025} for the details.
Given the solution to the LMIs~\eqref{eq:lmi}, the controller is then characterized by $H = R S^{-1}$.

\subsection{Convergence Analysis}
The following result proves the convergence of Algorithm~\ref{alg:distributed} to the optimal solution of~\eqref{eq:problem}.

\begin{proposition}\label{pr:convergence}
Let Assumptions~\ref{as:cost},~\ref{as:internal-model} hold.
Let $\{ \x_{k} \}_{k \to \N}$ be the trajectory generated by Algorithm~\ref{alg:distributed}, and let $\{ x_k^* \}_{k \to \N}$ be the optimal trajectory of~\eqref{eq:problem}.
Then it holds that
$$
    \limsup_{k \to \infty} \norm{\x_k - \x_k^*} = 0.
$$
\end{proposition}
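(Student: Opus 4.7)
The plan is to reduce Proposition~\ref{pr:convergence} to the robust-control / internal-model result of \cite{casti_control_2025}, once Algorithm~\ref{alg:distributed} is recognized as its natural instantiation on the saddle-point reformulation~\eqref{eq:problem-saddle}. Under Assumption~\ref{as:cost} the gradients $\nabla f_{i,k}(y_i) = A_i y_i + b_{i,k}$ are affine, so both Lagrangian gradients entering~\eqref{eq:structured-global-xi}--\eqref{eq:structured-global-om} become affine in $(\y,\uv)$, with the only exogenous signal being the stacked vector $b_k = [b_{1,k}^\top,\dots,b_{N,k}^\top]^\top$. By Assumption~\ref{as:internal-model}, $b_k$ is generated by an autonomous LTI system whose modes coincide with the roots of $B_\mathrm{D}(z)$, and these are, by construction, exactly the modes captured by $F$ after the distributed computation in Algorithm~\ref{alg:common-denominator}.

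I would first characterize the optimal trajectory $\{(\x_k^*,\w_k^*)\}$ as the unique primal-dual solution of the time-varying KKT system
\begin{equation*}
\mu(A\x_k^* + b_k) + \W_2\w_k^* + \W_3\x_k^* = \0, \qquad \W_2\x_k^* = \0,
\end{equation*}
with $A = \blk\{A_1,\ldots,A_N\}$; note that $\x_k^*$ is consensual so that $\x_k^* = \W_1\x_k^*$. I would then form error variables $\tilde{\mathbold{\xi}}_k,\tilde{\mathbold{\omega}}_k,\tilde{\y}_k,\tilde{\uv}_k$ relative to their optimal counterparts, and subtract the KKT equalities from the global updates~\eqref{eq:structured-global}. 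This cancels the affine contribution of $b_k$ in the gradient channels and yields an autonomous LTI error system of exactly the block structure analyzed in \cite{casti_control_2025}, with plant matrices built from $A,\W_2,\W_3$ and internal-model/controller pair $(F,H)$.

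At this point the convergence claim reduces to proving asymptotic stability of the autonomous error system. I would invoke the robust-control analysis of \cite{casti_control_2025}: feasibility of the LMIs~\eqref{eq:lmi} certifies a common quadratic Lyapunov function for the whole family of closed-loop matrices parameterized by eigenvalues in $[\ubar{l},\bar{l}]$. A simultaneous change of basis that diagonalizes the joint action of $A$ and the consensus triplet $(\W_1,\W_2,\W_3)$ would split the error dynamics into a stack of scalar closed loops, each belonging to this family, so the Lyapunov inequalities would then deliver $\tilde{\y}_k,\tilde{\uv}_k \to \0$. Propagating through the combine step $\x_k = \W_1\y_k$ and using $\x_k^* = \W_1\x_k^*$ finally gives $\norm{\x_k - \x_k^*} \to 0$.

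The main obstacle I anticipate is precisely this diagonalization/enclosure step: the error system's state matrix couples $A$, the three consensus matrices, and the Kronecker-lifted $(F,G,H)$ blocks, and one must verify that -- when decomposed along the consensus subspace and its orthogonal complement (the null space and range of $\W_2$) -- the resulting scalar closed-loop gains fall inside the interval $[\ubar{l},\bar{l}]$ for which the LMIs were solved. A secondary bookkeeping difficulty is the saddle-point cross-coupling through $\W_2$ between primal and dual channels; aligning the basis with the eigenstructure common to $\W_1,\W_2^2,\W_3$ -- which commute, being polynomials of $\W$ for every triplet in Table~\ref{tab:matrix-triplets} -- should make this transparent and reduce the problem to the one-dimensional analysis already carried out in \cite{casti_control_2025}.
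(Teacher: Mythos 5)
Your high-level route (use the quadratic structure to get an affine error system, then invoke the robust-control certificate of \cite{casti_control_2025}) is the same one the paper takes, but your proposal contains a genuine gap exactly at the point you flag as the ``main obstacle.'' You characterize $(\x_k^*,\w_k^*)$ as the \emph{unique} solution of the KKT system and then claim the LMI-certified Lyapunov function delivers $\tilde{\y}_k,\tilde{\uv}_k \to \0$. This cannot work as stated: for every triplet in Table~\ref{tab:matrix-triplets}, $\W_2$ is rank deficient (its kernel contains the consensus subspace), so the dual solution of the KKT system is not unique, and the closed-loop error dynamics are not asymptotically stable in the directions of $\ker \W_2$ in the dual channel. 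Concretely, in your proposed simultaneous diagonalization the scalar loops associated with the consensus eigenvalue have zero constraint gain, so no common quadratic Lyapunov function from~\eqref{eq:lmi} can certify contraction of the dual error there; this is precisely why the paper states that the convergence result of \cite{casti_control_2025} does \emph{not} apply directly.

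The paper closes this gap in two steps that are missing from your argument. First, it replaces the ``unique solution'' claim by convergence to the \emph{subspace} of optimal primal-dual pairs, invoking \cite[Proposition~2]{bastianello_internal_2024} for the dynamics~\eqref{eq:structured-local} without the combine steps (these are pure outputs, so dropping them is harmless, as you also implicitly use). Second, it pins down \emph{which} dual limit is reached: the relevant dual solution is the one lying in the range of $\W_2$ \cite[Lemma~2]{alghunaim_linear_2020}, and the iterates stay in that range because of the zero initialization of the dual variables ($u_{i,0}=0$), so $\uv_k$ converges to $\w_k^*$ rather than drifting in $\ker\W_2$. Your final step, passing through the combine matrix via $\x_k=\W_1\y_k$ and $\x_k^*=\W_1\x_k^*$, matches the paper. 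To repair your proof you would either need to reproduce this subspace-convergence plus range-space-invariance argument, or restrict the error analysis to the orthogonal complement of $\ker\W_2$ in the dual coordinates before applying the LMI certificate; without one of these, the step ``the Lyapunov inequalities deliver $\tilde{\uv}_k\to\0$'' is false.
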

\smallskip
\begin{proof}
Algorithm~\ref{alg:distributed} is designed by applying \cite{casti_control_2025} to~\eqref{eq:problem-regularized}. However, matrix $\W_2$ is not full row rank, and therefore the convergence result of \cite{casti_control_2025} does not apply directly. Indeed, while the primal solution is unique at all times ($\x_k^*$), the dual variable has multiple solutions. In particular, the dual solution of interest is the one lying in the range space of $\W_2$ \cite[Lemma~2]{alghunaim_linear_2020}, that is, $\w_k^* = \W_2 \uv$ for some $\uv \in \R^{n N}$.

Let us consider~\eqref{eq:structured-local} without the combine steps (\eqref{eq:structured-local-x}~\eqref{eq:structured-local-w}).
Then, we can apply \cite[Proposition~2]{bastianello_internal_2024} to prove the convergence of $\y_k$ to $\x_k^*$ and of $\uv_k$ to $\w_k^*$. This is indeed possible because the result guarantee convergence to the subspace of optimal solutions (which is a subspace due to the non-uniqueness of the dual solution).
However, to guarantee that $\uv_k$ convergence to the right dual solution, we need to guarantee that $\uv_k$ lies in the range space of $\W_2$. This is the case by having selected $\uv_0$ \cite{alghunaim_linear_2020}.

Finally, we need to guarantee that also $\x_k = \W_1 \y_k$ converges to $\x_k^*$. But this is implied by the fact that $\W_1$ is a consensus matrix, and that $\x_k^* = \W_1 \x_k^*$.
\end{proof}

\section{Numerical Results}\label{sec:numerical}
In this section we consider the numerical performance of Algorithm~\ref{alg:distributed}. To this end, we consider quadratic costs, as introduced in Assumption \ref{as:cost}, with $n = 15$. The costs have $A_i = V\Lambda V^\top$, where $V$ is a randomly generated orthogonal matrix and $\Lambda$ is diagonal with elements in $[1,5]$.  Finally, we consider a circle graph with $N = 10$ agents.

To avoid computing all optimal solutions, a proxy metric of optimality is used, namely: 
$$
    \varepsilon_k = \norm{\sum_{i = 1}^N \nabla f_{i,k}(\bar{x}_k)}^2 \qquad \bar{x}_k = \frac{1}{N} x_{i,k},
$$
which is zero if the agents have converged to the optimal trajectory of~\eqref{eq:problem}.

\subsection{Choosing the consensus matrices}\label{subsec:cons_mat}
We start by considering local costs characterized by $\{b_{i,k} \}_{k \in \N} = \sin(\nu k )\1$, $i \in [N]$. In this case we have access to an exact internal model. In this setting, we compare some different consensus matrices, as introduced in Table~\ref{tab:matrix-triplets}.

In Figures~\ref{fig:triplets-1} and~\ref{fig:triplets-2} we present the evolution of $\varepsilon_k$ for the different choices of consensus matrices, both for Algorithm~\ref{alg:distributed} (labeled ``Control-based'') and the corresponding gradient tracking method from \cite{alghunaim_decentralized_2021} (labeled ``unstructured'').
We remark that the algorithms in Figure~\ref{fig:triplets-1} apply a combine step ($\W_1 \neq I$), while the algorithms in Figure~\ref{fig:triplets-2} do not.
The step-sizes are hand-tuned for optimal performance.

Clearly, the proposed algorithms converge to the optimal trajectory, up to numerical precision, validating Proposition~\ref{pr:convergence}. On the other hand, the unstructured algorithms converge inexactly, generally to a quite large steady-state error.
Additionally, we remark that the choice of triplets affects the rate of convergence during the initial transient; notice in particular the different scales of the x-axes.

\begin{figure}[!ht]
    \includegraphics[width=0.95\linewidth]{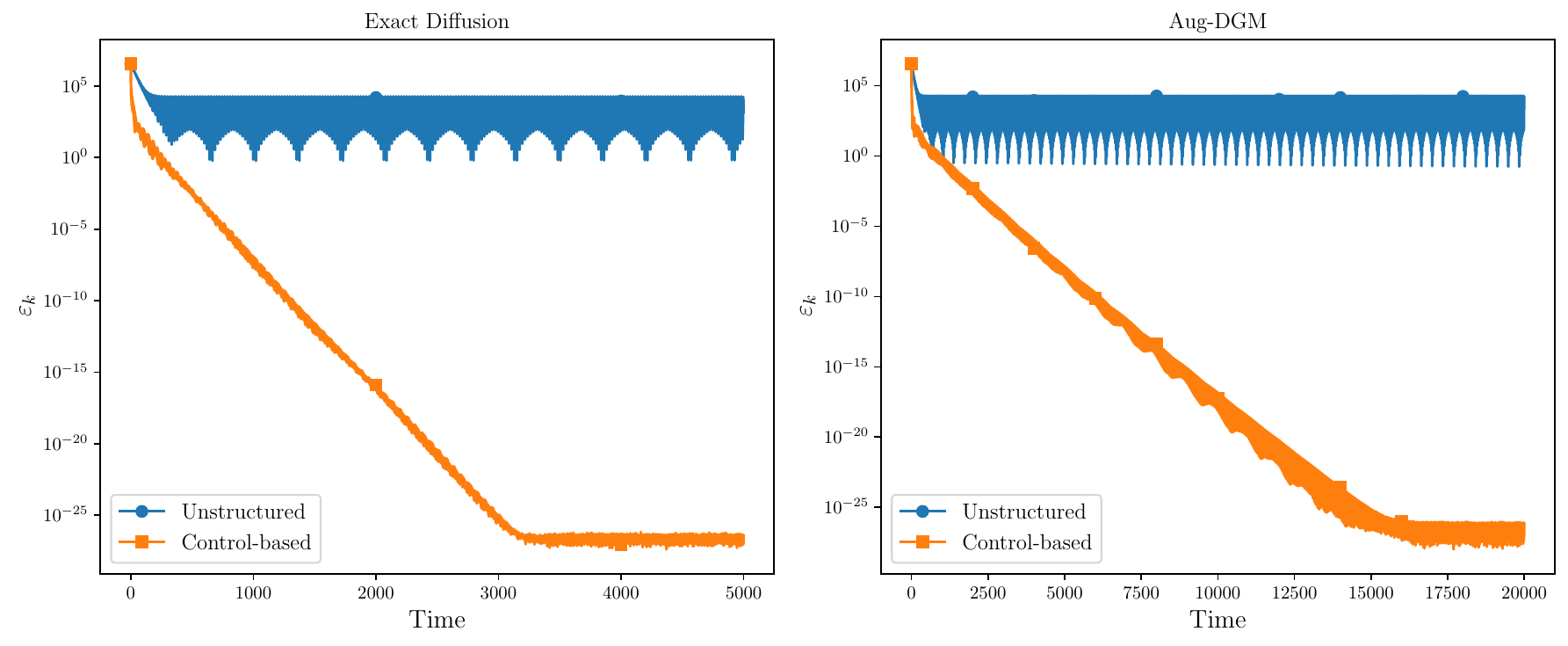}
    \caption{Tracking error comparison for consensus matrices with $\W_1 \neq I$ }
    \label{fig:triplets-1}
\end{figure}

\begin{figure}[!ht]
    \includegraphics[width=0.95\linewidth]{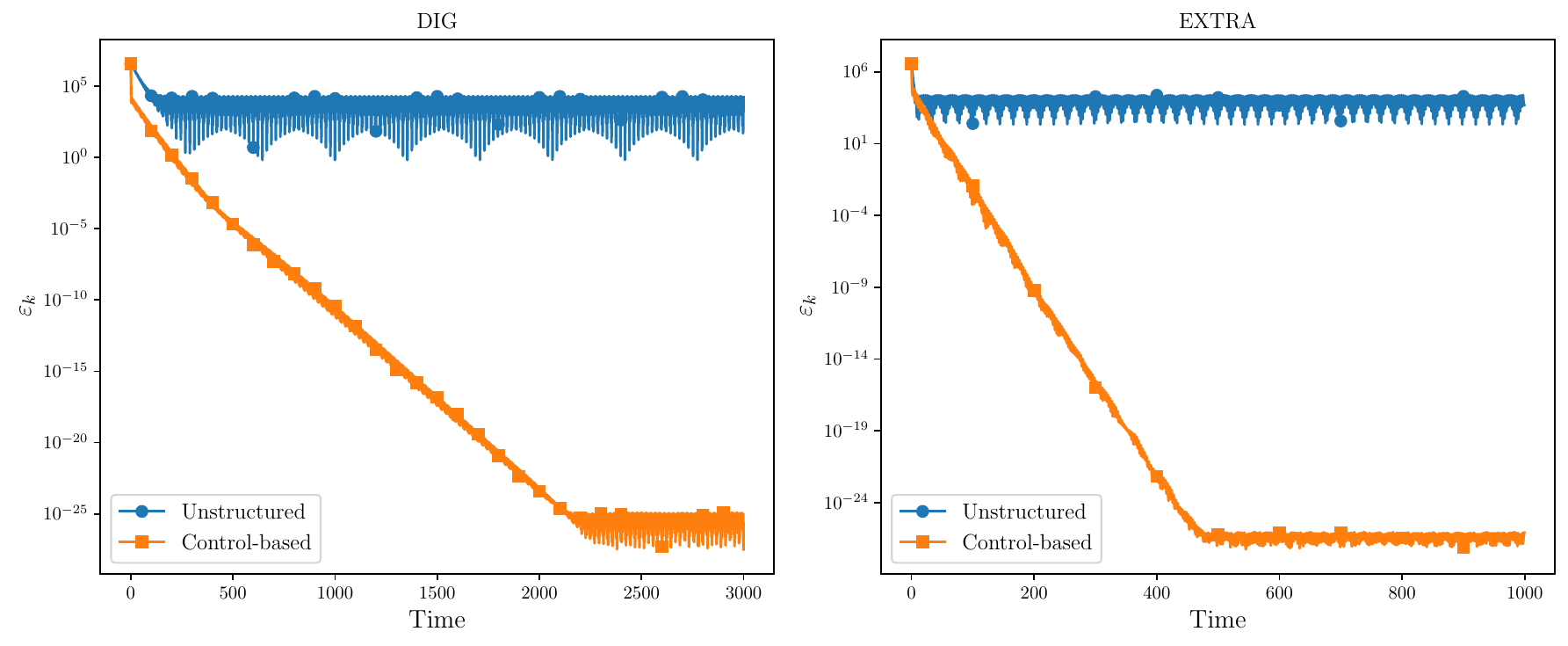}
        \caption{Tracking error comparison for consensus matrices with $\W_1 = I$ }
    \label{fig:triplets-2}
\end{figure}

\subsection{Changing signals}\label{subsec:changing_signals}
We turn now to evaluating the effect of different internal models on the algorithms of interest. For simplicity, we focus on the consensus matrices triplet DIGing.
We apply the algorithm to three different problems, characterized by linear terms $\{b_{i,k} \}_{k \in \N} $ that change as: $(i)$ ramp: $b_{i,k}  = k \bar{b}, \bar{b} \in \mathbb{R}^n $, $(ii)$ sine $b_{i,k}  =\sin(\nu k )\1$ (as in sec.~\ref{subsec:cons_mat}), and $(iii)$ squared sine $b_{i,k} =\sin^2(\nu k )\1$.

\begin{figure}[!ht]
    \centering
    \includegraphics[width=0.9\linewidth]{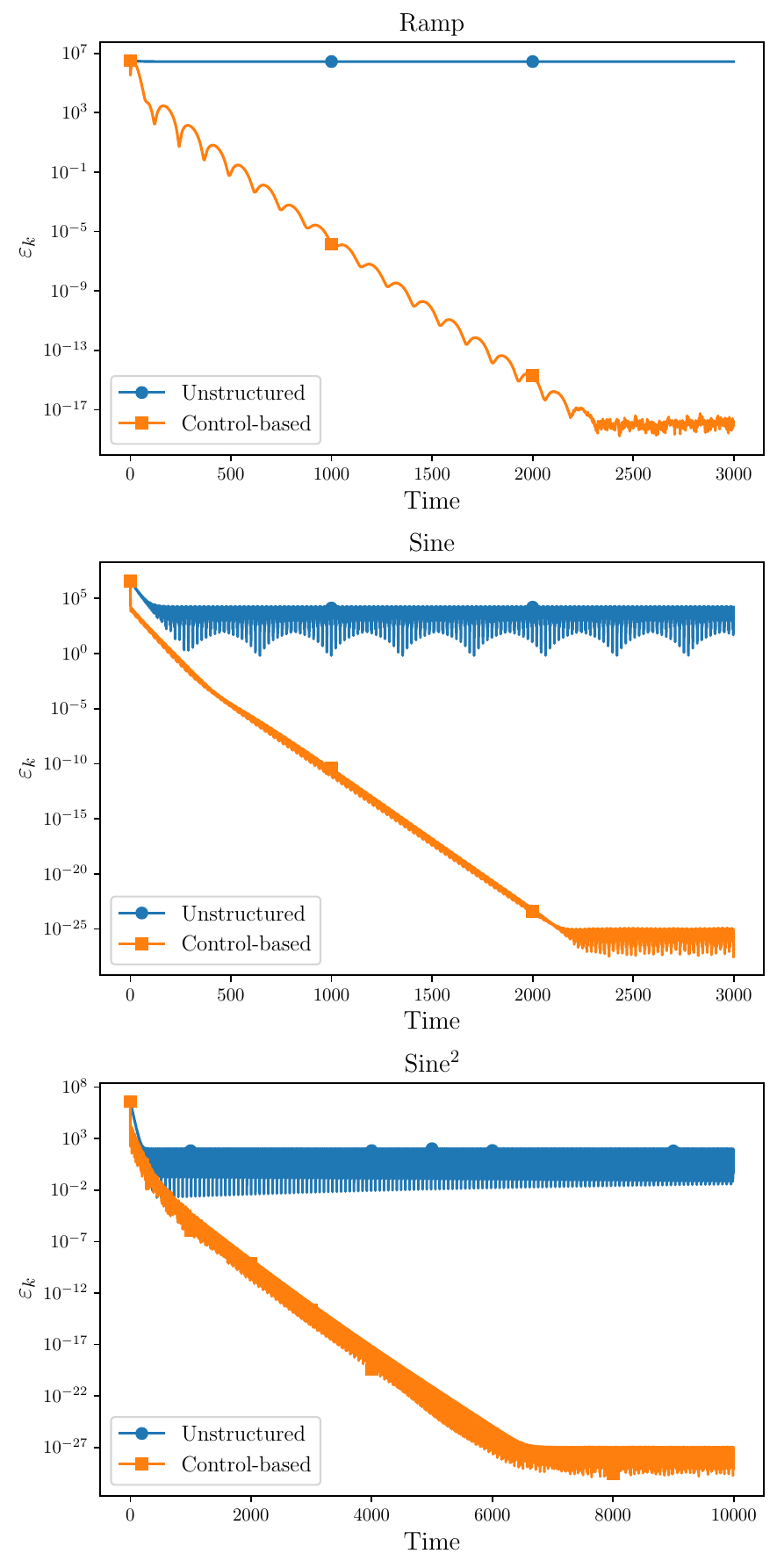}
    \caption{Tracking error comparison with different $b_{i,k} $}
    \label{fig:changing_signals}
\end{figure}

As we can see in Figure~\ref{fig:changing_signals}, for each of the different signal types, the $\varepsilon_k$ of the proposed structured algorithm converges to zero (up to numerical precision), further validating Proposition~\ref{pr:convergence}.         
As before, it can be observed that the unstructured algorithm performs many orders of magnitude worse.
We also notice that the rate of convergence during the initial transient is affected by the internal model. In particular, while in the ramp and sine scenarios the rate is similar, the higher order sine$^2$ internal model achieves slower convergence.

\subsection{Inexact internal model}
In the previous sections we assumed the agents had access to a precise internal model. However, in practice this might not be possible, especially when estimating the model from data.
In this section we evaluate the performance of the proposed algorithm when only an inexact internal model is available. In particular, we consider case $(ii)$ of section~\ref{subsec:changing_signals}, which is characterized by the internal models $B_{i,\mathrm{D}}(z) = z^2 - 2\cos(\nu)z - 1$. We then run the proposed algorithm with a perturbed internal model where $\hat{p}_1 = 2\cos(\nu) + e$, where the perturbation $e$ is drawn from $[0,0.1]$.

\begin{figure}[!ht]
    \includegraphics[width=0.95\linewidth]{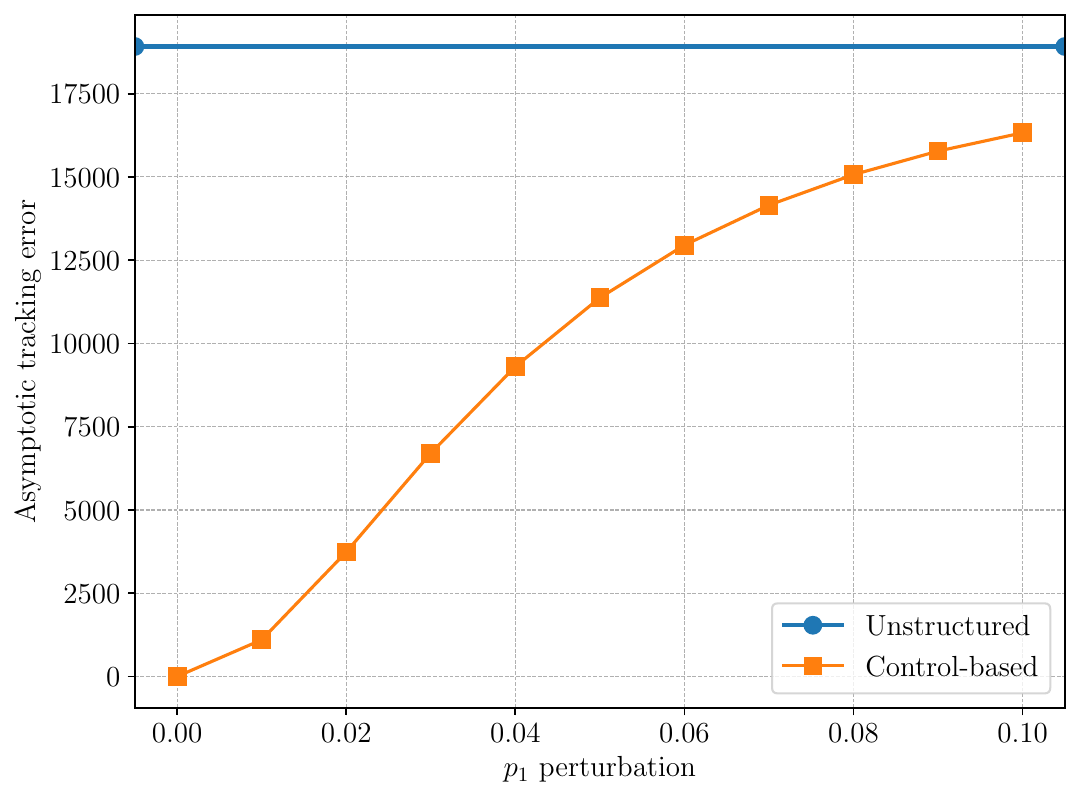}
    \caption{Asymptotic error when a perturbation is applied to $p_1$}
    \label{fig:inexact_model}
\end{figure}

Figure~\ref{fig:inexact_model} depicts the \textit{asymptotic tracking error}\footnote{Computed as the maximum $\varepsilon_k$ over the last 4/5 of the simulation.} attained by the algorithm for different choices of model perturbation $e$.
As we can see, the proposed algorithm does not converge exactly when the model is perturbed, and the asymptotic error grows as the perturbation grows.
However, the trend is not linear, since for larger perturbations, the impact on the asymptotic error grows less severe. Nevertheless, even for the largest perturbations, the proposed algorithm outperforms the unstructured alternative. 

\subsection{Non-quadratic}\label{subsec:numerical-nonquadratic}
As discussed in Remark~\ref{rem:non-quadratic}, the algorithm can also be applied in a non-quadratic scenario, and in this section we evaluate its performance in this context.
In particular, we now consider local cost functions of the form: \begin{equation}\label{eq:nonquad-cost}
        f_{i,k}(x) = \frac{1}{2} x^\top A_i x + \langle b_{i,k}, x \rangle + \sin(\nu k)\log[1+\exp (c^\top x)],
\end{equation}
where $i \in [N], \ \forall k \in \N$, and with $A_i = V \Lambda V^\top$, where $\Lambda$ has elements between $[1,10]$. Furthermore for this specific example, $\nu = 5$ and $b_{i,k}, c \in \R^n$, with $b_{i,k}$ constant and $\norm{c} = 1$. We keep the dimensions $n = 15$ and number of agents $N = 10$ as before. 

In order to apply the algorithm, we need to specify an internal model which captures the evolution of $ f_{i,k}(x)$. Due to the periodic nature of $ f_{i,k}(x)$, reasonable estimates for the internal model may be defined as follows: 
\begin{equation}\label{eq:approx_internal_model}
B_{i,\mathrm{D}}(z) = (z-1) \prod_{\ell=1}^L(z^2-2\cos(\ell\nu)z+1), \ L = 1,2,3 .
\end{equation}

\begin{figure}[!ht]
    \includegraphics[width=0.95\linewidth]{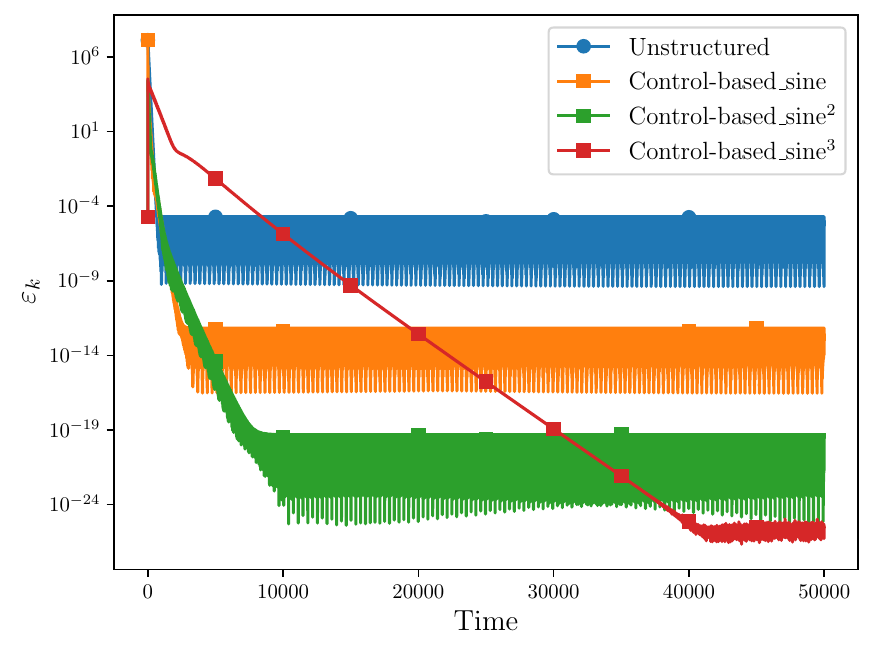}
    \caption{Tracking error for the proposed algorithm applied to~\eqref{eq:nonquad-cost} with~\eqref{eq:approx_internal_model}.}
    \label{fig:non-quadratic}
\end{figure}

Figure~\ref{fig:non-quadratic} compares the convergence of Algorithm~\ref{alg:distributed} against that of the unstructured alternative.
Clearly, Algorithm~\ref{alg:distributed} does not converge exactly because the assumptions of Proposition~\ref{pr:convergence} are not verified. However, the result again demonstrates that the proposed control-based algorithm greatly outperforms the unstructured algorithm.
As noted in subsection~\ref{subsec:changing_signals}, higher internal model orders lead to slower convergence, however in this case, the slower convergence also results in a lower error. This indicates that approximate internal models can be chosen to converge quickly but with less accuracy, or slowly with greater accuracy.

\section{Conclusions}
In this paper we designed a novel class of online distributed optimization algorithms leveraging control theoretical techniques.
We started by focusing on quadratic costs, and assuming to know an internal model of their variation. In this set-up, we formulated the algorithm design as a robust control problem, showing that it yields a fully distributed algorithm. We also provided a distributed routine to acquire the internal model.
We showed that the algorithm converges exactly to the sequence of optimal solutions.
We empirically evaluated the performance of the algorithm for different choices of parameters.
Additionally, we evaluated the performance of the algorithm for quadratic problems with inexact internal model and non-quadratic problems, and showed that it outperforms alternative algorithms in both scenarios.
Future research directions include exploring the design of control-based algorithms that achieve exact convergence for non-quadratic problems as well.

\bibliographystyle{IEEEtran}
\bibliography{references}

\begin{thebibliography}{10}
\providecommand{\url}[1]{#1}
\csname url@samestyle\endcsname
\providecommand{\newblock}{\relax}
\providecommand{\bibinfo}[2]{#2}
\providecommand{\BIBentrySTDinterwordspacing}{\spaceskip=0pt\relax}
\providecommand{\BIBentryALTinterwordstretchfactor}{4}
\providecommand{\BIBentryALTinterwordspacing}{\spaceskip=\fontdimen2\font plus
\BIBentryALTinterwordstretchfactor\fontdimen3\font minus
  \fontdimen4\font\relax}
\providecommand{\BIBforeignlanguage}[2]{{%
\expandafter\ifx\csname l@#1\endcsname\relax
\typeout{** WARNING: IEEEtran.bst: No hyphenation pattern has been}%
\typeout{** loaded for the language `#1'. Using the pattern for}%
\typeout{** the default language instead.}%
\else
\language=\csname l@#1\endcsname
\fi
#2}}
\providecommand{\BIBdecl}{\relax}
\BIBdecl

\bibitem{molzahn_survey_2017}
D.~K. Molzahn, F.~Dorfler, H.~Sandberg, S.~H. Low, S.~Chakrabarti, R.~Baldick,
  and J.~Lavaei, ``A survey of distributed optimization and control algorithms
  for electric power systems,'' \emph{IEEE Transactions on Smart Grid}, vol.~8,
  no.~6, p. 2941–2962, Nov. 2017.

\bibitem{nedic_distributed_2018}
A.~Nedić and J.~Liu, ``Distributed optimization for control,'' \emph{Annual
  Review of Control, Robotics, and Autonomous Systems}, vol.~1, no.~1, p.
  77–103, May 2018.

\bibitem{yang_survey_2019}
T.~Yang, X.~Yi, J.~Wu, Y.~Yuan, D.~Wu, Z.~Meng, Y.~Hong, H.~Wang, Z.~Lin, and
  K.~H. Johansson, ``A survey of distributed optimization,'' \emph{Annual
  Reviews in Control}, vol.~47, p. 278–305, 2019.

\bibitem{dallanese_optimization_2020}
E.~Dall’Anese, A.~Simonetto, S.~Becker, and L.~Madden, ``Optimization and
  learning with information streams: Time-varying algorithms and
  applications,'' \emph{IEEE Signal Processing Magazine}, vol.~37, no.~3, p.
  71–83, May 2020.

\bibitem{simonetto_timevarying_2020}
A.~Simonetto, E.~Dall’Anese, S.~Paternain, G.~Leus, and G.~B. Giannakis,
  ``Time-varying convex optimization: Time-structured algorithms and
  applications,'' \emph{Proceedings of the IEEE}, vol. 108, no.~11, p.
  2032–2048, Nov. 2020.

\bibitem{li_survey_2023}
X.~Li, L.~Xie, and N.~Li, ``A survey on distributed online optimization and
  online games,'' \emph{Annual Reviews in Control}, vol.~56, p. 100904, Jan.
  2023.

\bibitem{yuan_can_2020}
K.~Yuan, W.~Xu, and Q.~Ling, ``Can {Primal} {Methods} {Outperform}
  {Primal}-{Dual} {Methods} in {Decentralized} {Dynamic} {Optimization}?''
  \emph{IEEE Transactions on Signal Processing}, vol.~68, pp. 4466--4480, 2020.

\bibitem{dorfler_systems_2024}
F.~Dörfler, Z.~He, G.~Belgioioso, S.~Bolognani, J.~Lygeros, and M.~Muehlebach,
  ``Toward a systems theory of algorithms,'' \emph{IEEE Control Systems
  Letters}, vol.~8, p. 1198–1210, 2024.

\bibitem{wang_control_2010}
J.~Wang and N.~Elia, ``Control approach to distributed optimization,'' in
  \emph{2010 48th Annual Allerton Conference on Communication, Control, and
  Computing (Allerton)}.\hskip 1em plus 0.5em minus 0.4em\relax Monticello, IL,
  USA: IEEE, Sep. 2010, p. 557–561.

\bibitem{bin_stability_2022}
M.~Bin, I.~Notarnicola, and T.~Parisini, ``Stability, linear convergence, and
  robustness of the wang-elia algorithm for distributed consensus
  optimization,'' in \emph{2022 IEEE 61st Conference on Decision and Control
  (CDC)}.\hskip 1em plus 0.5em minus 0.4em\relax Cancun, Mexico: IEEE, Dec.
  2022, p. 1610–1615.

\bibitem{vanscoy_universal_2022}
B.~Van~Scoy and L.~Lessard, ``A universal decomposition for distributed
  optimization algorithms,'' \emph{IEEE Control Systems Letters}, vol.~6, p.
  3044–3049, 2022.

\bibitem{zhang_frequency_2024}
S.~Zhang, W.~Wu, Z.~Li, J.~Chen, and T.~T. Georgiou, ``Frequency-domain
  analysis of distributed optimization: Fundamental convergence rate and
  optimal algorithm synthesis,'' \emph{IEEE Transactions on Automatic Control},
  vol.~69, no.~12, p. 8539–8554, Dec. 2024.

\bibitem{carnevale_unifying_2025}
G.~Carnevale, N.~Mimmo, and G.~Notarstefano, ``A unifying system theory
  framework for distributed optimization and games,'' no. arXiv:2401.12623,
  Jan. 2025.

\bibitem{bastianello_internal_2024}
N.~Bastianello, R.~Carli, and S.~Zampieri, ``Internal {Model}-{Based} {Online}
  {Optimization},'' \emph{IEEE Transactions on Automatic Control}, vol.~69,
  no.~1, pp. 689--696, Jan. 2024.

\bibitem{casti_control_2025}
U.~Casti, N.~Bastianello, R.~Carli, and S.~Zampieri, ``A control theoretical
  approach to online constrained optimization,'' \emph{Automatica}, vol. 176,
  p. 112107, 2025.

\bibitem{casti_stochastic_2024}
U.~Casti and S.~Zampieri, ``Stochastic models for online optimization,'' no.
  arXiv:2411.19056, Nov. 2024.

\bibitem{simonetto_nonlinear_2024}
A.~Simonetto and P.~Massioni, ``Nonlinear optimization filters for stochastic
  time‐varying convex optimization,'' \emph{International Journal of Robust
  and Nonlinear Control}, p. rnc.7380, Apr. 2024.

\bibitem{alghunaim_decentralized_2021}
S.~A. Alghunaim, E.~K. Ryu, K.~Yuan, and A.~H. Sayed, ``Decentralized
  {Proximal} {Gradient} {Algorithms} {With} {Linear} {Convergence} {Rates},''
  \emph{IEEE Transactions on Automatic Control}, vol.~66, no.~6, pp.
  2787--2794, Jun. 2021.

\bibitem{bastianello_extrapolation_2023}
N.~Bastianello, R.~Carli, and A.~Simonetto, ``Extrapolation-{Based}
  {Prediction}-{Correction} {Methods} for {Time}-varying {Convex}
  {Optimization},'' \emph{Signal Processing}, vol. 210, p. 109089, 2023.

\bibitem{alghunaim_linear_2020}
S.~A. Alghunaim and A.~H. Sayed, ``Linear convergence of primal–dual gradient
  methods and their performance in distributed optimization,''
  \emph{Automatica}, vol. 117, p. 109003, Jul. 2020.

\end{thebibliography}

\end{document}